\newtheorem{theorem}{Theorem}[section]
\newtheorem{lemma}[theorem]{Lemma}
\newtheorem{remark}[theorem]{Remark}
\theoremstyle{definition}
\newcommand{\N}{\mathbb{N}}
\newcommand{\rev}{\mathrm{rev}}
\newcommand{\pref}{\mathrm{pref}}
\begin{document}

\title{Geometric Progressions meet Zeckendorf Representations}

\author{Diego Marques}
\address{Departamento de Matem\'atica\\
Universidade de Bras\'ilia\\
Bras\'ilia, DF\\
Brazil}
\email{diego@mat.unb.br}

\author{Pavel Trojovsk\'y}
\address{Department of Mathematics, Faculty of Science, University of Hradec Kr\'alov\'e, Rokitansk\'{e}ho 62, Hradec Kr\'alov\'e 50003, Czech Republic}
\email{pavel.trojovsky@uhk.cz}

\keywords{Zeckendorf representation, geometric progressions, linear numeration system, Pisot numeration, finite automata}
\subjclass[2020]{Primary 11B39; Secondary 11K55, 37B10, 68Q45}

\begin{abstract}
Motivated by Erd\H{o}s' ternary conjecture and by recent work of Cui--Ma--Jiang [``Geometric progressions meet Cantor sets'', \textit{Chaos Solitons Fractals} \textbf{163}
(2022), 112567.] on
intersections between geometric progressions and Cantor-like sets in standard
bases, we study the corresponding problem in the Zeckendorf numeration system.
We prove that, for any fixed finite set of forbidden binary patterns, any
integers $u\ge 1$, $q\ge 2$, and any window size $M$, the set
of exponents $n$ for which the Zeckendorf expansion of $u q^n$ avoids the
forbidden patterns within its $M$ least significant digits is either finite or ultimately periodic.

\end{abstract}

\maketitle

\section{Introduction}

The interplay between multiplicative structures (such as geometric progressions)
and digital structures (such as numeration systems with local constraints) is a
fertile source of problems at the interface of number theory, combinatorics,
and symbolic dynamics. A classical example is Erd\H{o}s' ternary conjecture
\cite{Erdos1979}, asserting that for all $n\ge 9$ the base-$3$ expansion of
$2^n$ contains the digit $2$.  Interpreting the middle-third Cantor set as the
set of reals whose ternary expansions avoid the digit $1$, this conjecture can
be viewed as a finiteness statement for the intersection of a geometric
progression with a Cantor-type digit-restricted set.

For standard integer bases $b\ge 2$, Cui, Ma, and Jiang \cite{Cui2022}
developed an algorithmic framework to study intersections of geometric
progressions with Cantor-like subsets of the integers defined by forbidden
blocks in base-$b$ expansions.  Their approach encodes the carry propagation of
multiplication into a finite combinatorial object, leading to effective
decidability results.

In the present paper we pursue an analogous viewpoint in a non-standard
numeration system, namely the \textit{Zeckendorf representation} associated with the
Fibonacci numbers.  Let $(F_n)_{n\ge 0}$ be defined by $F_0=0$, $F_1=1$,
$F_{n+2}=F_{n+1}+F_n$.  By Zeckendorf's theorem, every integer $N\ge 1$ admits a
unique decomposition
\[
   N=\sum_{i=2}^k \varepsilon_i F_i,
\]
with $\varepsilon_i\in\{0,1\}$, $\varepsilon_k=1$, and
$\varepsilon_i\varepsilon_{i+1}=0$ for $2\le i<k$.  We write
$Z(N)=\varepsilon_k\varepsilon_{k-1}\cdots \varepsilon_2$ for this Zeckendorf
word.  Symbolically, the set of Zeckendorf words is the \emph{Golden Mean shift},
i.e., the language over $\{0,1\}$ with the block $11$ forbidden (together with
the usual ``no leading zeros'' convention).

\smallskip

\noindent\emph{Remark on digit order.}  
The representation $Z(N)=\varepsilon_k \varepsilon_{k-1}\cdots \varepsilon_2$ lists digits from the most significant to the least significant.  
For local arithmetic (carry propagation) it is more natural to work with the \emph{reversed} order  
\[
\operatorname{rev}(Z(N))=\varepsilon_2\varepsilon_3\cdots \varepsilon_k,
\]  
i.e., reading from the least significant digit upward.  
All subsequent notation will refer to this LSD-first stream.

\smallskip

Fix a finite family $\mathcal F$ of non-empty binary words, thought of as
additional forbidden patterns.  This defines a Zeckendorf-restricted subset
\[
   \mathcal K_{\mathcal F}
   =\{N\ge 1:\ \rev(Z(N))\ \text{contains no factor from }\mathcal F\},
\]
where $\rev(Z(N))$ lists the Zeckendorf digits from the least significant digit
(the coefficient of $F_2$) upward.  Given integers $u\ge 1$ and $q\ge 2$, we
seek to understand the set of exponents
\[
   S_u=\{n\ge 0:\ uq^n\in \mathcal K_{\mathcal F}\}.
\]
If $\mathcal F=\{11\}$, then $\mathcal K_{\mathcal F}$ coincides with the
full Zeckendorf language and $S_u=\N_{\ge 0}$, so the interesting case is when
$\mathcal F$ imposes genuinely new constraints.

A direct analysis of $S_u$ requires inspecting the entire Zeckendorf expansion
of $uq^n$, whose length grows linearly with $n$.  To obtain an effective result
capturing the local arithmetic, we introduce a window-restricted variant.  Let
$L=\max\{|f|:f\in\mathcal F\}$ and fix $M\ge L$. Since we will take prefixes of least-significant-digit streams, we adopt a padding convention that avoids artificially creating forbidden patterns when the true Zeckendorf expansion is shorter than the window.  Let \(\#\notin\{0,1\}\) be a neutral padding symbol and define the right\-infinite word  
\[
\widetilde{Z}(N):=\operatorname{rev}(Z(N))\,\#^{\omega}\in\{0,1,\#\}^{\mathbb{N}}.
\]  
The symbol \(\#\) never equals \(0\) or \(1\); thus a forbidden binary pattern cannot be created by involving a \(\#\).  
Hence the prefix of length \(M\) of \(\widetilde{Z}(N)\) consists only of genuine Zeckendorf digits whenever \(N\) is large enough, and otherwise contains \(\#\)'s that do not affect the avoidance condition.

We say that a finite word $v\in\{0,1,\#\}^*$ \emph{avoids} $\mathcal F$ if no
$f\in\mathcal F$ occurs as a factor of $v$ (equivalently, occurrences cannot use
the symbol $\#$).  We then define
\[
   \mathcal K_{\mathcal F}^{(M)}
   =\{N\ge 1:\ \pref_M(\widetilde{Z}(N))\ \text{avoids }\mathcal F\},
\]
where for a word \(w = w_0 w_1 w_2 \dots \in \{0,1,\#\}^{\mathbb N}\) and an integer \(M \ge 1\), \(\pref_M(w)\) denotes its prefix of length \(M\), i.e., \(w_0 w_1 \dots w_{M-1}\).  
Thus constraints are enforced only within the first \(M\) least significant Zeckendorf digits. Clearly $\mathcal K_{\mathcal F}\subseteq \mathcal
K_{\mathcal F}^{(M)}$ for every $M$, and with the above padding convention one
has
\[
   \mathcal K_{\mathcal F}=\bigcap_{M\ge L}\mathcal K_{\mathcal F}^{(M)}.
\]
Accordingly, we set
\[
   S_u^{(M)}=\{n\ge 0:\ uq^n\in \mathcal K_{\mathcal F}^{(M)}\}.
\]

Our main result shows that, for each fixed window length $M$, the set
$S_u^{(M)}$ is always ultimately periodic, and the period and preperiod can be
computed effectively.

\begin{theorem}\label{thm:main_window}
Let $u\ge 1$, $q\ge 2$ be integers, and let $\mathcal F$ be a finite family of
non-empty binary words.  Let $L=\max\{|f|:f\in\mathcal F\}$ and fix $M\ge L$.
Then the set
\[
   S_u^{(M)}=\{n\ge 0:\ uq^n\in \mathcal K_{\mathcal F}^{(M)}\}
\]
is either finite or ultimately periodic.  That is, there exist integers $n_0\ge 0$ and $p\ge 1$
such that
\[
   n\in S_u^{(M)}\ \Longleftrightarrow\ n+p\in S_u^{(M)}\qquad(n\ge n_0).
\]
Moreover, one can effectively compute such a pair $(n_0,p)$ and decide whether
$S_u^{(M)}$ is finite or infinite.  In the finite case, the same procedure
outputs all elements of $S_u^{(M)}$.
\end{theorem}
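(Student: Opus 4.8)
The plan is to reduce the theorem to the eventual periodicity of a single sequence valued in a finite set, and then to establish that periodicity by encoding the arithmetic of multiplication by $q$ near the least‑significant end of the Zeckendorf expansion into a finite deterministic transition system, in the spirit of the carry‑propagation automaton of Cui--Ma--Jiang.

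\emph{Reduction.} Whether $n\in S_u^{(M)}$ depends on $uq^n$ only through the length‑$M$ prefix $w_n:=\pref_M(\widetilde Z(uq^n))\in\{0,1,\#\}^M$, since $n\in S_u^{(M)}$ exactly when $w_n$ avoids $\mathcal F$, a fixed decidable property of a word of the fixed length $M$ (testable, e.g., by a finite automaton over $\{0,1,\#\}$ built from $\mathcal F$). Hence it is enough to prove that the sequence $(w_n)_{n\ge 0}$, valued in the finite set $\{0,1,\#\}^M$, is eventually periodic. Indeed, if $w_{n+p}=w_n$ for all $n\ge n_0$, then the predicate ``$w_n$ avoids $\mathcal F$'' inherits this, so beyond $n_0$ the set $S_u^{(M)}$ is a finite union of arithmetic progressions of common difference $p$: if none of the $p$ residues occurring in a full period is accepting then $S_u^{(M)}\subseteq\{0,\dots,n_0+p-1\}$ is finite, and otherwise $S_u^{(M)}$ is infinite and ultimately periodic. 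Effectivity will be automatic once $(w_n)$ is generated by a finite machine.

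\emph{A finite-state model.} Passing from $uq^n$ to $uq^{n+1}=q\cdot uq^n$ is, on digit strings, the composition of (i) multiplying each Zeckendorf digit by $q$, which produces a string over the bounded alphabet $\{0,1,\dots,q\}$, and (ii) \emph{normalizing} back into the Golden‑Mean‑shift form via the rewriting rules induced by $F_i+F_{i+1}=F_{i+2}$ and $2F_i=F_{i+1}+F_{i-2}$. Since the Fibonacci system is a Pisot (linear) numeration system, normalization of bounded‑alphabet strings is realized by a finite transducer; I would use this to define a finite set $Q$ of \emph{local configurations}, each recording a bounded window at the low end of the (partially normalized) representation of $uq^n$ together with a bounded description of the carry that normalization of the higher‑order part pushes down into that window. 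One then builds a transition map $\delta\colon Q\to Q$ performing steps (i)--(ii) locally and an output map $\lambda\colon Q\to\{0,1,\#\}^M$ reading off the stabilized low $M$ digits, so that $w_n=\lambda(\delta^{\,n}(s_0))$ for the initial configuration $s_0$ encoding $u$. Since $Q$ is finite and $\delta$ is a total deterministic map, the orbit $(\delta^{\,n}(s_0))_{n\ge 0}$ is eventually periodic with preperiod and period at most $|Q|$; this yields the eventual periodicity required above, and the pair $(n_0,p)$, the finite/infinite dichotomy, and — in the finite case — the explicit list of elements are obtained by simulating $\delta$ for at most $|Q|$ steps, with $|Q|$ bounded explicitly in $u$, $q$, $M$, and $\mathcal F$.

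\emph{The main obstacle.} The delicate point, and precisely where the Zeckendorf setting departs from the base‑$b$ case of Cui--Ma--Jiang, is controlling the downward carries in step (ii). In base $b$ the low $M$ digits of $uq^n$ depend only on $uq^n \bmod b^M$, carries travel only towards higher positions, and eventual periodicity in $n$ is elementary. In the Fibonacci system the rule $2F_i=F_{i+1}+F_{i-2}$ pushes a carry \emph{down}, and such carries can cascade through arbitrarily many positions, so the low‑$M$ window is genuinely \emph{not} a function of $uq^n$ modulo anything. The crux is thus to show that the carry entering the fixed low window from above is nonetheless captured by a bounded amount of information and that its succession over $n$ is itself governed by a finite automaton — i.e., that although the downward cascade is unbounded in length, it meets the fixed window in only finitely many ``shapes'', occurring in a finite‑state pattern. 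Pinning this down — choosing the right notion of local configuration and verifying that $\delta$ is well defined on a finite state space — is the technical heart of the argument; the rest is bookkeeping together with the pigeonhole step above.
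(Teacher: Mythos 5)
Your reduction is exactly the paper's: everything hinges on showing that the window sequence $w_n=\pref_M(\widetilde Z(uq^n))\in\{0,1,\#\}^M$ is generated by a deterministic finite-state process, after which the pigeonhole step, the acceptance predicate, and effectivity are routine. But that key step is precisely what your proposal does not contain. You describe a set $Q$ of ``local configurations'' (low window plus a bounded descriptor of the carry pushed down from above) and a transition map $\delta$ with $w_n=\lambda(\delta^{\,n}(s_0))$, and then you state yourself that choosing the right notion of configuration and verifying that $\delta$ is a well-defined self-map of a finite set is ``the technical heart'' and is left open. As written this is a plan, not a proof. For comparison, the paper fills this step by invoking Frougny's rationality results to build a synchronous letter-to-letter transducer $\mathcal T_q$ with $\mathcal T_q(\widetilde Z(N))=\widetilde Z(qN)$ read LSD-first (Lemma~\ref{prop:frougny}), and then deduces (Lemma~\ref{lem:locality}) that the first $M$ output letters depend only on the first $M$ input letters, so the window itself serves as the state and $w_{n+1}=\Theta_{q,M}(w_n)$.

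Moreover, the obstacle you single out is genuine and cannot be absorbed by bookkeeping: no window-to-window map of the kind your sketch (or the paper's Lemma~\ref{lem:locality}) would require can exist, because the low digits of $qN$ really do depend on unboundedly high digits of $N$. Concretely, for $q=2$ and $M=2$, the integers $N=2=F_3$ and $N'=10=F_6+F_3$ have the same two least significant Zeckendorf digits ($01$ in LSD-first order), yet $2N=4=F_4+F_2$ and $2N'=20=F_7+F_5+F_3$ have least significant digits $10$ and $01$ respectively; already for $M=1$ one may take $N=2$ versus $N=5$. Hence $\pref_M(\widetilde Z(qN))$ is not a function of $\pref_M(\widetilde Z(N))$, so any correct finite-state model must carry strictly more state than the window and must prove that this extra information --- the influence of the evolving high-order part on the downward carries --- itself updates autonomously as $n\mapsto n+1$. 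That is exactly the claim your proposal defers and never establishes; until it is supplied (or replaced by a different mechanism), the argument is incomplete.
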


The proof is entirely finite-state.  The crucial input is the fact that
multiplication by a fixed integer $q$ in the Fibonacci numeration system is
realizable by a finite sequential transducer \cite{Frougny1992}.  Reading digits
from the least significant side, this transducer induces, for each $M$, a
deterministic update rule on the set of length-$M$ windows
$\{0,1,\#\}^M$.  After composing with the finite automaton that checks avoidance
of $\mathcal F$ inside the window, we obtain a deterministic dynamical system on
a finite state space.  The orbit associated with the progression $(uq^n)_{n\ge
0}$ is therefore eventually periodic, which yields ultimate periodicity of
$S_u^{(M)}$ as an immediate corollary.  The same finite-state construction gives
an explicit algorithm to compute the eventual period.

Theorem~\ref{thm:main_window} gives a rigorous and completely effective solution
to the window-restricted problem.  While passing from $S_u^{(M)}$ to the global
set $S_u$ entails additional difficulties (one must control where forbidden
patterns may first appear as $n$ varies), the window-restricted variant already
captures the local dynamics of Zeckendorf multiplication and provides a
practical method to detect all exponents for which the low-order Zeckendorf
digits of $uq^n$ satisfy prescribed constraints.

\section{The proof of Theorem \ref{thm:main_window}}

\subsection{Automata-theoretic dictionary} \label{subsec:automata-dictionary}

The proof of our main result relies on concepts from automata theory, which we briefly recall here for readers less familiar with this terminology. Our presentation follows standard references such as Frougny~\cite{Frougny1992} and Allouche \& Shallit \cite{Allouche2003}.

\textbf{Finite automaton.} A \emph{deterministic finite automaton (DFA)} consists of a finite set of \emph{states}, an input alphabet, a transition function that specifies how to move between states when reading a symbol, a designated initial state, and a set of accepting states. Such an automaton processes an input word symbol by symbol, and accepts the word if it ends in an accepting state. DFAs precisely recognize the class of \emph{regular languages}.

\textbf{Transducer.}  
A \emph{finite-state transducer} extends the automaton model by producing output.  
Formally, it is a device with finitely many states that reads an input word over some alphabet and produces a corresponding output word over another alphabet.  
Each transition consumes one input symbol and may produce zero, one, or several output symbols.  
If the transducer always produces exactly one output symbol per input symbol, it is called \emph{synchronous} or \emph{letter-to-letter}.  
In the context of numeration systems, such a transducer is often called a \textit{carry transducer}, because it encodes the propagation of carries during arithmetic operations.  
In this paper, the crucial Lemma~2.1 asserts that multiplication by a fixed integer in the Zeckendorf system can be implemented by a synchronous (carry) transducer.

\textbf{Rational functions.} A function between sets of words is called \emph{rational} if its graph can be recognized by a finite-state transducer. Equivalently, rational functions are exactly those computable by deterministic letter-to-letter transducers, possibly after a bounded initial shift. The theory of rational functions in numeration systems, developed by Frougny~\cite{Frougny1992}, underpins our construction of the multiplication transducer.

\textbf{Ultimate periodicity in finite-state dynamics.} A fundamental observation, used repeatedly in our proof, is that any deterministic process on a finite state space must eventually become periodic. More precisely, if \(X\) is a finite set and \(f: X \to X\) is a function, then for any starting point \(x_0 \in X\) the sequence \(x_{n+1} = f(x_n)\) is \emph{ultimately periodic}: there exist integers \(n_0 \ge 0\) and \(p \ge 1\) such that \(x_{n+p} = x_n\) for all \(n \ge n_0\). This elementary combinatorial fact drives the periodicity conclusion in Theorem~\ref{thm:main_window}.

With these notions at hand, the structure of our argument becomes transparent: we model the least-significant-digit prefixes of \(uq^n\) as states of a finite system, whose update rule is induced by the multiplication transducer. The ultimate periodicity of the window sequence (and hence of the set \(S_u^{(M)}\)) then follows directly from finiteness of the state space.

\subsection{Auxiliary results}

We will use the following automata-theoretic input on Fibonacci (Zeckendorf)
arithmetic.  It is ultimately a consequence of Frougny's general results on
normalization and rational transductions in Pisot numeration systems.

\begin{lemma}\label{prop:frougny}
Let $q\ge 2$ be an integer. There exists a finite-state synchronous
(letter-to-letter) transducer $\mathcal T_q$ over the alphabet $\{0,1,\#\}$,
reading the input from the least significant digit upward, with the following
property: for every $N\ge 1$,
\[
   \mathcal T_q\bigl(\widetilde Z(N)\bigr)=\widetilde Z(qN),
   \qquad \widetilde Z(N):=\rev(Z(N))\,\#^{\omega}.
\]
Moreover, such a transducer can be effectively constructed from $q$.
\end{lemma}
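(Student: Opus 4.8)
\emph{Proof plan.} The strategy is to realize $\mathcal{T}_q$ as the composition of a one-state digit-rescaling transducer with Frougny's normalization transducer for the Fibonacci numeration system. First I would record why that machinery is available: the golden ratio $\varphi=(1+\sqrt5)/2$ is a Pisot number, being the root $>1$ of $X^2-X-1$, whose conjugate $-1/\varphi$ has modulus $<1$; hence the linear numeration system with base sequence $(F_i)_{i\ge 2}$ is a Pisot numeration system. For such systems Frougny's normalization results apply: for every finite digit alphabet $A=\{0,1,\dots,a\}$ there is a finite transducer $\mathcal N_A$ that reads a word over $A$ from the least significant digit upward and outputs the canonical Zeckendorf word of the integer it represents in the Fibonacci base. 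A priori $\mathcal N_A$ is subsequential with a delay (a fixed finite shift of the output) depending only on $a$: rewriting a digit $\ge 2$ via $2F_n=F_{n+1}+F_{n-2}$ propagates a carry both toward higher and toward lower indices, so the output at a given position may depend on boundedly many later input digits, and it is precisely the fact that these carries travel only a bounded distance that keeps $\mathcal N_A$ finite-state.

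Second, I would reduce multiplication by $q$ to one such normalization. If $\rev(Z(N))=\varepsilon_2\varepsilon_3\cdots\varepsilon_k$, then the word obtained from it by the letterwise substitution $0\mapsto 0$, $1\mapsto q$, $\#\mapsto\#$ is a (non-canonical) Fibonacci-base representation of $qN$ over the alphabet $\{0,q\}\subseteq\{0,1,\dots,q\}$, since $\sum_{i}(q\varepsilon_i)F_i=q\sum_i\varepsilon_iF_i=qN$. This substitution is computed by a one-state letter-to-letter transducer $\mathcal R_q$. Setting $\mathcal T_q:=\mathcal N_{\{0,\dots,q\}}\circ\mathcal R_q$ and noting that letter-to-letter transductions are closed under composition (bounded delays adding), we obtain a finite letter-to-letter transducer that maps the genuine-digit part of $\widetilde Z(N)$ to $\rev(Z(qN))$. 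Since $\varphi$, the digit bound $q$, and Frougny's construction are all explicit, $\mathcal T_q$ is effectively computable from $q$.

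Third, two bookkeeping points remain in order to match the statement on the nose. (i) \emph{Strict synchrony and the high-order end.} The Zeckendorf word of $qN$ is longer than that of $N$ by at most a constant $c=c(q)=O(\log q)$ digits; indeed one may take any $c$ with $F_c>q$, since then $F_{k+1+c}=F_{k+1}F_{c+1}+F_kF_c\ge F_{k+1}F_{c+1}>qF_{k+1}$. So $\mathcal N$ must still emit up to $c$ output symbols after consuming the last non-$\#$ input digit, and it carries a buffer of boundedly many pending symbols throughout. I would fold this buffer into the state; this keeps the state set finite and turns the machine into a strictly letter-to-letter transducer on $\{0,1,\#\}^{\mathbb N}$. (ii) \emph{The padding symbol.} Once all non-$\#$ digits have been read and the buffer has been flushed (which occurs within $c$ further steps), the transducer maps each subsequent input $\#$ to a single output $\#$. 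With this convention the output stream is exactly $\rev(Z(qN))\,\#^{\omega}=\widetilde Z(qN)$, and no forbidden binary factor can be manufactured, since $\#$ is never output in place of a $0$ or a $1$ while genuine digits remain.

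The only genuine obstacle is this last bit of bookkeeping: promoting Frougny's a priori delayed/subsequential normalization transducer to a strictly synchronous one without destroying finiteness, and aligning the two $\#^{\omega}$ tails. The conceptual core — that normalization over a bounded digit alphabet is finite-state in a Pisot system — is Frougny's theorem, used as a black box. If one prefers to avoid normalization for a general alphabet, an alternative is to realize multiplication by $q$ as $q-1$ successive Fibonacci additions of $N$, addition being a finite letter-to-letter transduction in a Pisot system, again by Frougny, and to compose these; the normalization route is cleaner here as it produces a single transducer with an explicit state bound.
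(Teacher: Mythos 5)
The decisive step in your construction is the claim that Frougny's normalization $\mathcal N_A$, read from the least significant digit upward, is subsequential with a \emph{bounded} delay -- that ``carries travel only a bounded distance'' -- so that a buffer of boundedly many pending output symbols can be folded into the state to produce a strictly synchronous machine. That claim is false, and it is exactly the mathematical heart of the matter, not bookkeeping. What Frougny proves for Pisot bases is that normalization on a bounded alphabet is a \emph{rational} function, i.e.\ its graph is recognized by a finite (in general non-deterministic) two-tape automaton; this does not give an input-deterministic LSD-first machine with bounded lookahead. In the Zeckendorf system the rewriting $2F_m=F_{m+1}+F_{m-2}$ pushes carries \emph{downward}, and the downward influence of high-order input digits on low-order output digits is unbounded. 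Concretely, for $q=2$: $N=4=F_4+F_2$ and $N=6=F_5+F_2$ have the same least significant digit ($\rev(Z(4))=101$, $\rev(Z(6))=1001$), yet $\rev(Z(8))=00001$ and $\rev(Z(12))=10101$ already differ in the coefficient of $F_2$; likewise $N=9=F_6+F_2$ and $N=14=F_7+F_2$ agree in their four lowest digits, while $2N=18=F_7+F_5$ and $2N=28=F_8+F_5+F_3$ differ already in the coefficient of $F_3$. Such pairs exist with arbitrarily long low-order agreement (via the Ostrowski/rotation description of the low digits: the lowest digit of $qN$ is an interval condition on $\{qN\varphi\}$, and a preimage under $x\mapsto qx$ of an interval endpoint need not be a cylinder endpoint at any level), so \emph{no} deterministic letter-to-letter LSD-first transducer, even one allowed a bounded output delay, computes $N\mapsto qN$ on Zeckendorf streams. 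Hence your steps (i)--(ii), which promote $\mathcal N_{\{0,\dots,q\}}\circ\mathcal R_q$ to a strictly synchronous device by buffering finitely many symbols, cannot be carried out as described; also, the parenthetical explanation that bounded carry propagation is what ``keeps $\mathcal N_A$ finite-state'' misattributes why Frougny's construction is finite.

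For comparison, the paper's own proof does not claim bounded carry propagation: it shows the graph $R_q=\{(Z(N),Z(qN))\}$ is a rational relation (multiplication by $q$ as a composition of additions along an addition chain, addition being rational because $\varphi$ is Pisot), proves the same length bound $\bigl|\,|Z(qN)|-|Z(N)|\,\bigr|\le C(q)$ that you prove, and then invokes Frougny's theorem that a rational relation with bounded length differences is realized by a letter-to-letter transducer with a bounded initial function, finally absorbing that adjustment into $\#$-transitions. Note, however, that this route produces a letter-to-letter device recognizing the \emph{relation}, not an input-deterministic sequential machine; the determinism/locality that your argument asserts directly (and that the paper also relies on afterwards, in Lemma~\ref{lem:locality}) is precisely what the $N=4,6$ and $N=9,14$ computations above rule out. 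So the gap in your proposal is genuine: the bounded-delay sequentiality you assume is not available in the Fibonacci system, and any correct treatment has to confront the unbounded downward carry propagation rather than fold it into a finite buffer.
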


\begin{proof}
We use the framework of rational relations and transducers recalled by Frougny \cite{Frougny1992},
namely that a function is \emph{rational} if and only if its graph is a rational
relation, and that rational relations are closed under composition
\cite[\S2]{Frougny1992}.

\smallskip
\noindent\emph{Rationality of addition and of multiplication by $q$.}
Let $\varphi=(1+\sqrt5)/2$. Since $\varphi$ is a Pisot number, Frougny proves that
normalization in base $\varphi$ is rational on any alphabet and, in particular,
that addition is rational (see Corollary 3.6 and Example 3.4 in \cite{Frougny1992}).
Specializing to the Fibonacci (Zeckendorf) numeration system, this yields that the
graph of the addition map on Zeckendorf representations is a rational relation.
Fix $q\ge2$. Choose any addition chain for $q$ (for instance, the one obtained from
the binary expansion of $q$). Evaluating $N\mapsto qN$ along this chain expresses
multiplication by $q$ as a finite composition of additions and duplications, hence
the graph
\[
   R_q \;:=\;\{(Z(N),Z(qN)):\ N\ge1\}\subset \{0,1\}^*\times\{0,1\}^*
\]
is a rational relation by closure under composition \cite[\S2]{Frougny1992}.

\smallskip
\noindent\emph{Uniform bounded length differences.}
Write $Z(N)=\varepsilon_k\varepsilon_{k-1}\cdots\varepsilon_2$ with $\varepsilon_k=1$.
Then $F_k\le N<F_{k+1}$, hence $\lvert Z(N)\rvert=k-1$.
Let
\[
   C(q):=\min\{c\ge1:\ F_c\ge q\}.
\]

We claim that for every $N\ge1$ with leading index $k$ one has
\begin{equation}\label{eq:length-bound}
   \lvert Z(qN)\rvert \le \lvert Z(N)\rvert + C(q).
\end{equation}
Indeed, from $N<F_{k+1}$ we get $qN<qF_{k+1}\le F_{C(q)}F_{k+1}$. Using the standard
Fibonacci identity
\[
   F_{k+C}=F_C F_{k+1}+F_{C-1}F_k \qquad (k\ge0,\ C\ge1),
\]
we obtain $F_{k+C(q)}\ge F_{C(q)}F_{k+1}\ge qF_{k+1}>qN$. Therefore $qN<F_{k+C(q)}$,
so the leading Fibonacci index of $qN$ is at most $k+C(q)-1$, which implies
$\lvert Z(qN)\rvert\le (k+C(q)-1)-1 < (k-1)+C(q)=\lvert Z(N)\rvert+C(q)$, proving
\eqref{eq:length-bound}. Since $qN\ge N$, one also has $\lvert Z(qN)\rvert\ge \lvert
Z(N)\rvert$, hence
\[
   \bigl|\lvert Z(qN)\rvert-\lvert Z(N)\rvert\bigr|\le C(q)\qquad(N\ge1).
\]
Thus the rational relation $R_q$ has bounded differences in the sense of
\cite[Definition~2.1]{Frougny1992}.

\smallskip
\noindent\emph{Letter-to-letter realization and passage to padded streams.}
By \cite[Proposition~2.1]{Frougny1992}, any rational relation with bounded
differences is realized by a transducer whose edges are labeled by pairs of
letters (i.e.\ a letter-to-letter device), equipped with a bounded initial
function. Applying this to $R_q$ yields a finite letter-to-letter transducer that
recognizes the graph of $N\mapsto qN$ on Zeckendorf words, with an initial
adjustment bounded in length by $C(q)$.

To obtain a synchronous transducer on right-infinite streams, we use the padding
symbol $\#$. Consider the map $w\mapsto \rev(w)\#^\omega$ from $\{0,1\}^*$ into
$\{0,1,\#\}^{\mathbb N}$. The bounded initial adjustment from
\cite[Proposition~2.1]{Frougny1992} can be absorbed into finitely many
$\#$-labeled transitions (since $\#^\omega$ supplies arbitrarily many padding
symbols), after which the transducer proceeds letter-to-letter forever.
This produces a finite synchronous transducer $\mathcal T_q$ on
$\{0,1,\#\}^{\mathbb N}$ such that
\[
   \mathcal T_q(\rev(Z(N))\#^\omega)=\rev(Z(qN))\#^\omega
\]
for every $N\ge1$, i.e.\ $\mathcal T_q(\widetilde Z(N))=\widetilde Z(qN)$.

\smallskip
\noindent\emph{Effectiveness.} The constant \(C(q)\) is explicit because the Fibonacci numbers grow exponentially with ratio \(\varphi\) and so \(C(q)\) can be bounded in terms of \(\log_{\varphi} q\).  
An addition chain for \(q\) is explicit, and the closure operations on transducers (product/construction and composition) are effective in the rational‑relations model~\cite[\S2]{Frougny1992}. Hence \(\mathcal{T}_{q}\) can be constructed effectively from \(q\).
\end{proof}


\begin{figure}[h]
\centering
\begin{tikzpicture}[
  >=Stealth,
  node distance=44mm,
  on grid,
  auto,
  every state/.style={draw, thick, minimum size=11mm, inner sep=1pt},
  lab/.style={font=\small, inner sep=1pt},
  omit/.style={font=\small}
]
  \node[state, initial, initial text={}] (c0) {$c=0$};
  \node[state, right=of c0] (c1) {$c=1$};

  \path[->]
    (c0) edge[loop above] node[lab] {$0/0$} (c0)
    (c1) edge[loop above] node[lab] {$0/1$} (c1);

  \path[->]
    (c0) edge[bend left=16] node[lab, above] {$1/0$} (c1)
    (c1) edge[bend left=16] node[lab, below, yshift=-1mm] {$\#/1$} (c0);

  \path[->]
    (c0) edge[loop below] node[lab] {$\#/\#$} (c0)
    (c1) edge[loop right] node[lab] {$\#/\#$} (c1);

  \node[omit] (dots) [below=18mm of c1] {$\cdots$};
  \path[->, dashed, thick]
    (c1) edge node[lab, right] {$1/\ast$} (dots);

\end{tikzpicture}
\caption{Schematic excerpt of the $q=2$ multiplication transducer (not all
states/transitions shown). It reads $\widetilde Z(N)=\rev(Z(N))\#^\omega$
LSD-first and outputs $\widetilde Z(2N)$; edges are labeled by input/output
pairs. The dashed arrow indicates omitted states.}
\label{fig:transducer}
\end{figure}
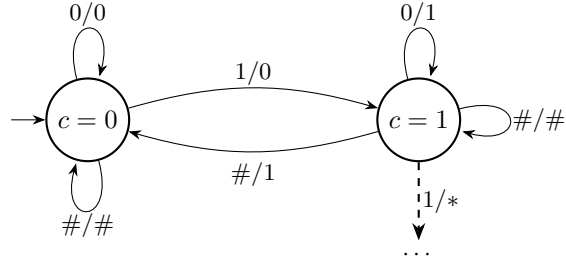

\medskip
The next lemma records the locality principle that will drive the finite-state
dynamics.  Since $\mathcal T_q$ is letter-to-letter, the first $M$ output symbols
only depend on the first $M$ input symbols.

\begin{lemma}\label{lem:locality}
Let $q\ge 2$ and $M\ge 1$. Let $\mathcal T_q$ be a letter-to-letter transducer as in
Lemma~\ref{prop:frougny}. Then there exists a deterministic map
\[
   \Theta_{q,M}:\{0,1,\#\}^M\to \{0,1,\#\}^M
\]
such that for every $N\ge 1$,
\[
   \pref_M(\widetilde{Z}(qN))=\Theta_{q,M}\!\bigl(\pref_M(\widetilde{Z}(N))\bigr).
\]
In other words, \(\Theta_{q,M}\) is the \emph{transition function} of the finite automaton that tracks the length-\(M\) window of the least significant digits under multiplication by \(q\).
\end{lemma}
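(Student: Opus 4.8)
The plan is to read $\Theta_{q,M}$ directly off the internal structure of $\mathcal T_q$, exploiting the \emph{causality} of deterministic letter-to-letter transducers: the symbol emitted at output position $i$ depends only on the input symbols at positions $0,1,\dots,i$. First I would fix notation, writing $\mathcal T_q=(Q,q_0,\delta,\lambda)$ with $Q$ a finite state set, $q_0\in Q$ the initial state, $\delta\colon Q\times\{0,1,\#\}\to Q$ the (deterministic, total---completing with a sink state if necessary) transition function, and $\lambda\colon Q\times\{0,1,\#\}\to\{0,1,\#\}$ the output function; on an input stream $a_0a_1a_2\cdots$ the machine visits states $q_0,q_1,q_2,\dots$ with $q_{i+1}=\delta(q_i,a_i)$ and emits $b_0b_1b_2\cdots$ with $b_i=\lambda(q_i,a_i)$. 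At this point I would stress that the transducer furnished by Lemma~\ref{prop:frougny} is genuinely synchronous from position $0$, the bounded initial adjustment having already been absorbed into $\#$-labeled transitions; confirming that no residual lookahead or initial offset survives is the one delicate bookkeeping point, and it is exactly what that construction guarantees.

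Next I would define $\Theta_{q,M}\colon\{0,1,\#\}^M\to\{0,1,\#\}^M$ by simulation: for a word $w=a_0a_1\cdots a_{M-1}$, run $\mathcal T_q$ from $q_0$ on $w$, producing states $q_0,\dots,q_M$ and output $b_0\cdots b_{M-1}$ as above, and set $\Theta_{q,M}(w)=b_0b_1\cdots b_{M-1}$. Since $\delta$ and $\lambda$ are single-valued and $q_0$ is fixed, this is a well-defined total map on the finite set $\{0,1,\#\}^M$. (It is defined on every length-$M$ word, including those that do not arise as a prefix of any $\widetilde Z(N)$; on such words the value simply carries no arithmetic content, and the lemma only asserts the displayed identity for genuine Zeckendorf streams.)

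Finally I would verify the identity. Fix $N\ge1$ and write $a_0a_1a_2\cdots=\widetilde Z(N)$ and $\mathcal T_q(\widetilde Z(N))=\widetilde Z(qN)=b_0b_1b_2\cdots$ for the run from $q_0$, so $q_{i+1}=\delta(q_i,a_i)$ and $b_i=\lambda(q_i,a_i)$. A one-line induction on $i$ shows that $q_i$ depends only on $q_0$ and $a_0\cdots a_{i-1}$, whence $b_i=\lambda(q_i,a_i)$ depends only on $a_0\cdots a_i$. Consequently, running $\mathcal T_q$ on the finite prefix $\pref_M(\widetilde Z(N))=a_0\cdots a_{M-1}$ traverses exactly $q_0,\dots,q_M$ and emits exactly $b_0\cdots b_{M-1}$, so by the definition of $\Theta_{q,M}$ we get $\Theta_{q,M}\bigl(\pref_M(\widetilde Z(N))\bigr)=b_0b_1\cdots b_{M-1}=\pref_M(\widetilde Z(qN))$, which is the claim. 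Effectiveness is immediate, since $\Theta_{q,M}$ is obtained by simulating the effectively constructed $\mathcal T_q$ of Lemma~\ref{prop:frougny} on each of the finitely many words in $\{0,1,\#\}^M$. The only genuine obstacle, as noted, is the purely notational one of pinning down synchronicity from the very first position; everything else is the standard causality argument for finite-state transductions.
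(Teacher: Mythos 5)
Your proposal is correct and follows essentially the same route as the paper: define $\Theta_{q,M}$ by running $\mathcal T_q$ on the length-$M$ window (the paper equivalently sets $\Theta_{q,M}(v)=\pref_M(\mathcal T_q(v\#^\omega))$), then use the letter-to-letter causality of $\mathcal T_q$ together with the identity $\mathcal T_q(\widetilde Z(N))=\widetilde Z(qN)$ from Lemma~\ref{prop:frougny}. Your explicit $(Q,q_0,\delta,\lambda)$ formalization and the induction on positions merely spell out the causality step that the paper states in one line, and your remark that synchronicity from position $0$ is exactly what Lemma~\ref{prop:frougny} must supply matches the paper's intent.
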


\begin{proof}
For $v\in\{0,1,\#\}^M$, define
\[
   \Theta_{q,M}(v):=\pref_M\!\bigl(\mathcal T_q(v\#^\omega)\bigr).
\]
Because $\mathcal T_q$ is letter-to-letter, it produces one output symbol per input
symbol, in order. Hence the first $M$ output symbols depend only on the first $M$
input symbols, so $\Theta_{q,M}$ is well-defined.

Now let $v=\pref_M(\widetilde Z(N))$. Since $\widetilde Z(N)$ begins with $v$, the
letter-to-letter property gives
\[
   \pref_M\!\bigl(\mathcal T_q(\widetilde Z(N))\bigr)
   =
   \pref_M\!\bigl(\mathcal T_q(v\#^\omega)\bigr)
   =
   \Theta_{q,M}(v).
\]
Using $\mathcal T_q(\widetilde Z(N))=\widetilde Z(qN)$ from
Lemma~\ref{prop:frougny} yields the claimed identity.
\end{proof}


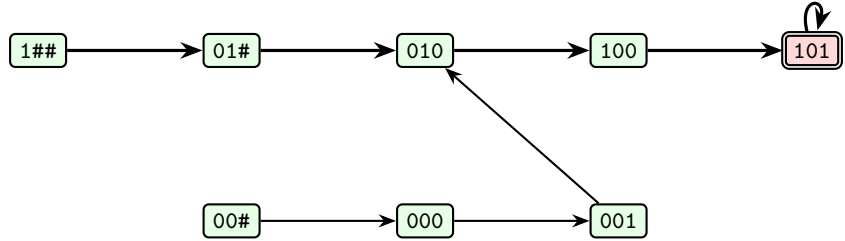
\begin{figure}[h]
\centering
\begin{tikzpicture}[
  >=Stealth,
  node distance=18mm and 18mm,
  every node/.style={font=\small},
  acc/.style={draw, thick, rounded corners=2pt, fill=green!10, inner sep=3.5pt},
  rej/.style={draw, thick, double, rounded corners=2pt, fill=red!15, inner sep=3.5pt},
  orbit/.style={->, very thick},
  edge/.style={->, thick},
]

\node[acc] (w0) at (0,0) {\texttt{1\#\#}};
\node[acc] (w1) [right=of w0] {\texttt{01\#}};
\node[acc] (w2) [right=of w1] {\texttt{010}};
\node[acc] (w3) [right=of w2] {\texttt{100}};
\node[rej] (w4) [right=of w3] {\texttt{101}};

\draw[orbit] (w0) -- (w1);
\draw[orbit] (w1) -- (w2);
\draw[orbit] (w2) -- (w3);
\draw[orbit] (w3) -- (w4);
\draw[orbit] (w4) edge[loop above] (w4);

\node[acc] (a0) [below=of w2] {\texttt{000}};
\node[acc] (a1) [left=of a0] {\texttt{00\#}};
\node[acc] (a2) [right=of a0] {\texttt{001}};

\draw[edge] (a1) -- (a0);
\draw[edge] (a0) -- (a2);
\draw[edge] (a2) -- (w2);

\end{tikzpicture}

\caption{Illustrative excerpt of the window dynamics for $q=2$ and $M=3$.
Vertices are windows $w\in\{0,1,\#\}^3$, coloured green if $w$ avoids $101$ and red
otherwise ($\mathcal F=\{101\}$). Thick arrows indicate a sample forward
trajectory from $w_0=\pref_3(\widetilde Z(1))=\texttt{1\#\#}$.}
\label{fig:theta-window}
\end{figure}

With the auxiliary results in place, we now proceed to the proof of Theorem \ref{thm:main_window}.

\subsection{The proof}

Fix $u\ge 1$, $q\ge 2$, a finite family $\mathcal F$ of non-empty binary words,
and an integer $M\ge 1$. Set $x_n=uq^n$ for $n\ge 0$ and define
\[
   w_n:=\pref_M(\widetilde{Z}(x_n))\in\{0,1,\#\}^M.
\]
By Lemma~\ref{lem:locality}, we have the recursion
\[
   w_{n+1}=\Theta_{q,M}(w_n)\qquad(n\ge 0).
\]
Thus $(w_n)_{n\ge 0}$ is the forward orbit of $w_0$ under a deterministic map on
the finite set $\{0,1,\#\}^M$. Consequently, the sequence $(w_n)$ is ultimately
periodic: there exist $n_0\ge 0$ and $p\ge 1$ such that
\[
   w_{n+p}=w_n\qquad(n\ge n_0).
\]

Now $n\in S_u^{(M)}$ is equivalent, by definition, to the condition that the
length-$M$ word $w_n$ avoids $\mathcal F$. Since the predicate
\[
   P(v):=\text{``$v$ avoids $\mathcal F$''}\qquad(v\in\{0,1,\#\}^M)
\]
depends only on the state $v$, and the sequence $(w_n)$ is ultimately periodic,
the truth values $P(w_n)$ are ultimately periodic as well. Therefore the set
\[
   S_u^{(M)}=\{n\ge 0:\; w_n \text{ avoids } \mathcal F\}
\]
is ultimately periodic.

\medskip
\noindent\emph{Effectiveness.}
Lemma~\ref{prop:frougny} provides an effective construction of the
transducer $\mathcal T_q$, hence of the induced map $\Theta_{q,M}$ in
Lemma~\ref{lem:locality}. Starting from $w_0=\pref_M(\widetilde{Z}(u))$, we
iterate $w_{n+1}=\Theta_{q,M}(w_n)$ and record the first repeated state. The
first repetition yields explicit $(n_0,p)$, and the same computation decides
whether $S_u^{(M)}$ is finite or infinite. In the finite case, the algorithm
outputs all elements of $S_u^{(M)}$ by checking $P(w_n)$ along the finite orbit
segment before the cycle.
\qed

\begin{remark}
The proof avoids any ``infinite regress'' because the evolution of the window
states $w_n$ is governed directly by the finite transducer implementing
$N\mapsto qN$ in LSD-first order. No appeal to Pisot dynamics or decay of
interference from higher-order digits is needed.
\end{remark}

\section{An Illustrative Example}\label{sec:example}

We illustrate the finite-state procedure underlying
Theorem~\ref{thm:main_window} on a concrete instance.

\medskip
\noindent\textbf{Parameters.}
Let $u=1$, $q=2$, and $\mathcal F=\{11, 101\}$.  Since Zeckendorf words already
exclude the factor $11$, we are asking for those exponents $n$ for which the
\emph{first $M$ least significant Zeckendorf digits} of $2^n$ avoid the pattern
$101$.

Recall that if $Z(N)=\varepsilon_k\cdots\varepsilon_2$, then the least
significant digit stream is $\rev(Z(N))=\varepsilon_2\varepsilon_3\cdots
\varepsilon_k$.  The forbidden factor $101$ in $\rev(Z(N))$ corresponds to the
existence of an index $i\ge 4$ such that
\[
   \varepsilon_{i-2}=1,\qquad \varepsilon_{i-1}=0,\qquad \varepsilon_i=1.
\]
Thus, forbidding $101$ means that two $1$'s cannot occur with exactly one $0$
between them in the least-to-most reading; equivalently, within the inspected
window, consecutive $1$'s must be separated by at least two zeros.

\subsection*{The automata and the induced window dynamics}

We fix the window length $M=5$ and work with the padded least-significant-digit
stream
\[
   \widetilde{Z}(N)=\rev(Z(N))\,\#^\omega\in\{0,1,\#\}^{\N}.
\]
Accordingly, the window state we track is
\[
   w_n:=\pref_5\bigl(\widetilde{Z}(2^n)\bigr)\in\{0,1,\#\}^5.
\]
The admissibility condition $n\in S_1^{(5)}$ is exactly that $w_n$ avoids the
binary pattern $101$ (occurrences cannot use the symbol $\#$).

\medskip
\noindent\textbf{Multiplication transducer.}
By Lemma~\ref{prop:frougny} there exists a finite sequential transducer
$\mathcal T_2$ (effectively constructible) which, reading $\widetilde{Z}(N)$
from the least significant side, outputs $\widetilde{Z}(2N)$.  Since the
transducer is synchronous, it induces a deterministic map
\[
   \Theta_{2,5}:\{0,1,\#\}^5\to\{0,1,\#\}^5,\qquad
   w\mapsto \pref_5\bigl(\mathcal T_2(w\#^\omega)\bigr),
\]
and Lemma~\ref{lem:locality} gives
\[
   w_{n+1}=\Theta_{2,5}(w_n)\qquad(n\ge 0).
\]
Thus $(w_n)$ is the forward orbit of $w_0=\pref_5(\widetilde{Z}(1))$ under a
deterministic map on the finite set $\{0,1,\#\}^5$.

\medskip
\noindent\textbf{Pattern detector.}
To decide whether $w_n$ avoids $\{101\}$ one may either check directly inside the
length-$5$ word $w_n$, or equivalently run a small DFA that scans the window and
rejects upon seeing $101$.  Because the window length is fixed and small, the
direct check is simplest and completely effective; we use it in the computation
below.

\subsection*{The first iterates and the finite orbit}

We start at $n=0$. Since $Z(1)=1$, we have $\rev(Z(1))=1$, hence
\[
   \widetilde{Z}(1)=1\#\#\#\#\cdots
   \qquad\text{and}\qquad
   w_0=1\#\#\#\#.
\]
This window clearly avoids $101$, so $0\in S_1^{(5)}$.

Iterating $w_{n+1}=\Theta_{2,5}(w_n)$ produces the window sequence for
$2^n$.  For instance, the first few Zeckendorf expansions are
\[
   Z(1)=1,\qquad Z(2)=10,\qquad Z(4)=101,\qquad Z(8)=10000,
\]
hence the corresponding padded windows (LSD first) are
\[
   w_0=1\#\#\#\#,\quad
   w_1=01\#\#\#,\quad
   w_2=101\#\#,\quad
   w_3=00001.
\]
Thus $w_2$ already contains the forbidden factor $101$, so $2\notin S_1^{(5)}$,
whereas $w_1$ and $w_3$ avoid $101$, so $1,3\in S_1^{(5)}$.

Because $\{0,1,\#\}^5$ is finite, the orbit $(w_n)$ is ultimately periodic.  In
this particular instance, an explicit computation (using the algorithm of
Theorem~\ref{thm:main_window}, i.e., iterating $\Theta_{2,5}$ until the first
repeated window) yields a preperiod of length $29$ and a cycle of length $4$.
Moreover, every state in the cycle is rejecting, hence $S_1^{(5)}$ is finite
and equals the set of admissible indices in the preperiod:
\[
   S_1^{(5)}=\{0,1,3,4,6,8,10,28\}.
\]

\begin{table}[h]
\centering
\caption{Evolution of the window state \(w_n = \operatorname{pref}_{5}(\widetilde{Z}(2^{n}))\)}
\label{tab:window_evolution}
\begin{tabular}{c|c|c}
\(n\) & \(w_n\) & \(n \in S_{1}^{(5)}\)? \\ \hline
0 & \(1\#\#\#\#\) & yes \\
1 & \(01\#\#\#\) & yes \\
2 & \(101\#\#\) & no \\
3 & \(00001\) & yes \\
4 & \(10100\) & yes \\
5 & \(00101\) & no \\
\(\vdots\) & \(\vdots\) & \(\vdots\) \\
28 & \(01010\) & yes \\
29 & \(10101\) & no \\
30 & \(01011\) & no \\
31 & \(10110\) & no \\
32 & \(01101\) & no \\
\end{tabular}
\end{table}

\noindent The full orbit can be generated mechanically, only the first few entries and the cycle are shown above (see also Figure \ref{fig:orbit_structure}).

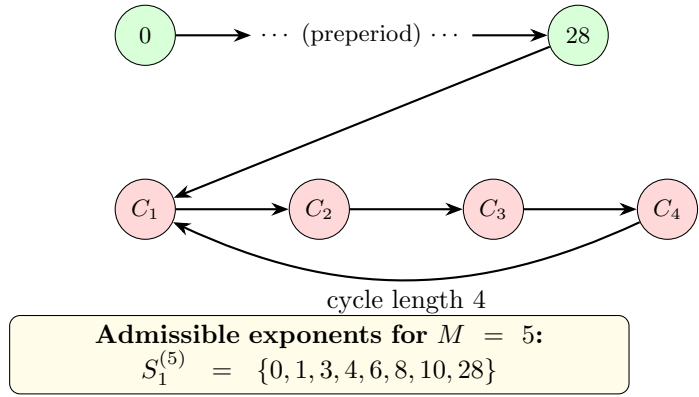
\begin{figure}[h]
\centering
\begin{tikzpicture}[node distance=1.5cm, auto, >=Stealth,
    state/.style={circle, draw, minimum size=0.8cm, font=\small},
    adm/.style={circle, draw, fill=green!15, minimum size=0.8cm, font=\small},
    rej/.style={circle, draw, fill=red!15, minimum size=0.8cm, font=\small},
    pre/.style={rectangle, draw=none, font=\small}]

    \node[adm] (x0) {$0$};
    \node[pre, right=1.0cm of x0] (dots) {$\cdots$ (preperiod) $\cdots$};
    \node[adm, right=1.0cm of dots] (x28) {$28$};

    \node[rej, below=1.5cm of x0] (c1) {$C_1$};
    \node[rej, right=of c1] (c2) {$C_2$};
    \node[rej, right=of c2] (c3) {$C_3$};
    \node[rej, right=of c3] (c4) {$C_4$};

    \draw[->, thick] (x0) -- (dots);
    \draw[->, thick] (dots) -- (x28);
    \draw[->, thick] (x28) -- (c1);

    \draw[->, thick] (c1) -- (c2);
    \draw[->, thick] (c2) -- (c3);
    \draw[->, thick] (c3) -- (c4);
    \draw[->, thick, bend left=25] (c4) to node[midway, below] {cycle length $4$} (c1);

    \node[draw, rectangle, rounded corners, fill=yellow!10, text width=8cm, align=center, below=1.0cm of c2] (info) {
        \textbf{Admissible exponents for $M=5$:}\\
        $S_1^{(5)}=\{0,1,3,4,6,8,10,28\}$
    };

\end{tikzpicture}
\caption{Orbit structure for $(w_n)$ with $u=1$, $q=2$, $\mathcal F=\{101\}$ and
window $M=5$.  The window orbit has a preperiod of length $29$ followed by a
cycle of length $4$.  In this instance the cycle consists entirely of rejecting
states, hence $S_1^{(5)}$ is finite.}
\label{fig:orbit_structure}
\end{figure}

\begin{remark}\label{rem:comparison}
Theorem~\ref{thm:main_window} concerns only the window-restricted sets
$S_u^{(M)}$.  In the present small example one observes that the finite set
$S_1^{(5)}$ coincides with the set of $n$ for which the \emph{entire} Zeckendorf
expansion of $2^n$ avoids $101$.  This coincidence is not guaranteed in general,
but it illustrates that modest window sizes may already capture the full
behaviour for small parameters.
\end{remark}

\section{Concluding remarks}

The main result of this work is intentionally formulated for the
\emph{window-restricted} set
\[
   S_u^{(M)}=\{n\ge 0:\ \pref_M(\widetilde Z(uq^n))\ \text{avoids the forbidden patterns}\},
\]
rather than for the full intersection set
\[
   S_u=\{n\ge 0:\ Z(uq^n)\ \text{globally avoids the forbidden patterns}\}.
\]
This restriction is not merely technical. It isolates the portion of the
problem that can be treated in a completely unconditional and effective way by
finite-state methods: once one has a synchronous transducer for multiplication
by $q$ in the Zeckendorf system, the induced dynamics on length-$M$ prefixes is
a self-map of a finite state space, and eventual periodicity follows from
purely combinatorial considerations. By contrast, passing from prefix
constraints to global constraints requires additional control of the evolution
of the \emph{most significant} digits and of the normalization/carry
propagation beyond any fixed window. These issues are genuinely global and
explain why the full set $S_u$ is more delicate.

\section*{Acknowledgement}
D.M. would like to acknowledge the financial support provided by the National Council for Scientific and Technological Development (CNPq). P.T. was supported by the institutional support for the long-term conceptual development of the research organization, Faculty of Science, University Hradec Králové, No. 2226/2026.

\begin{bibdiv}
\begin{biblist}

\bib{Cui2022}{article}{
    author={Cui, Y.},
    author={Ma, D.},
    author={Jiang, K.},
    title={Geometric progressions meet Cantor sets},
    journal={Chaos Solitons Fractals},
    volume={163},
    date={2022},
    pages={112567},
}

\bib{Erdos1979}{article}{
    author={Erd\H{o}s, P.},
    title={Some problems and results on combinatorial number theory},
    conference={
        title={Graph theory and combinatorics},
        address={Cambridge},
        date={1978},
    },
    book={
        publisher={Academic Press, London},
    },
    date={1979},
    pages={2--14},
}

\bib{Frougny1992}{article}{
   author={Frougny, C.},
   title={Representation of numbers and finite automata},
   journal={Math. Systems Theory},
   volume={25},
   date={1992},
   pages={37--60},
}

\bib{Allouche2003}{book}{
   author={Allouche, J. P.},
   author={Shallit, J.},
   title={Automatic Sequences: Theory, Applications, Generalizations},
   publisher={Cambridge University Press},
   address={Cambridge},
   date={2003},
   isbn={978-0-521-82332-6},
}

\end{biblist}
\end{bibdiv}

\end{document}